\newtheorem{thm}{Theorem}[section]
 \newtheorem{lem}{Lemma}[section]
 \theoremstyle{definition}
 \theoremstyle{remark}
 \numberwithin{equation}{section}
\newcommand{\dd}{\mathop{}\!\mathrm{d}}
\author{Charles Collot}
\address{CY Cergy Paris University. Laboratoire AGM, 2 avenue Adolphe Chauvin
95302 Cergy-Pontoise}
\email{charles.collot@cyu.fr}
\author{Pierre Germain} 
\address{Department of Mathematics, Huxley building, South Kensington campus, Imperial College London, London SW7 2AZ, United Kingdom}
\email{pgermain@ic.ac.uk}
\author{Eliot Pacherie}
\address{CNRS and CY Cergy Paris University. Laboratoire AGM, 2 avenue Adolphe Chauvin
95302 Cergy-Pontoise}
\email{eliot.pacherie@cyu.fr}
\begin{document}

\title[Absence of embedded spectrum for linearized nonlinear Schr\"odinger equations]{Absence of embedded spectrum for nonlinear Schr\"odinger equations linearized around one dimensional ground
states}

\maketitle

\begin{abstract}
    We consider the nonlinear Schrödinger equation in dimension one for a large class of nonlinearities. We show that ground states do not have embedded eigenvalues in the essential spectrum of their linearized operators.
\end{abstract}

\section{Introduction}

\subsection{The nonlinear Schr\"odinger equation linearized around a solitary wave}

We consider the nonlinear Schr{\"o}dinger equation
\begin{equation}
\label{NLS}
\tag{NLS}
i \partial_t u + \partial_x^2 u + F (| u |^2) u = 0 
\end{equation}
on $\mathbb{R} \times \mathbb{R}$ where the nonlinearity $F \in C^1
(\mathbb{R}^+, \mathbb{R})$ satisfies $F (0) = 0$. Standing waves are solutions of the form
\[ u (x, t) = e^{i \mu t} Q (x) \]
for some $\mu > 0$, which are indeed solutions if and only if
\begin{equation}\label{elliptic-standing-wave} \partial_x^2 Q - \mu Q + F (| Q |^2) Q = 0. 
\end{equation}
Then, for $p, \gamma, y \in \mathbb{R}$, the Galilean, phase and translation symmetries of \eqref{NLS} give the family of traveling waves
$$
e^{i( p x +( \mu-p^2) t + \gamma)} Q (x-2p t - y).
$$
Such solutions are important to understand the behaviour of solutions to \eqref{NLS}, in particular because of the soliton resolution conjecture which asserts that general solutions should decompose asymptotically in time into the sum of decoupled solitons. Of particular importance are ground state solutions to \eqref{elliptic-standing-wave}, which are those, without loss of generality by translation invariance, that are even, positive, exponentially decaying, and such that $Q'<0$ for $x>0$. They are expected to be more stable than other solutions to \eqref{elliptic-standing-wave} which are called excited states. Ground states exist under fairly general assumptions on $F$. For example, if it is assumed to be smooth, with a non-degenerate local minimum at zero, see \cite{BL}.

Perturbing the standing wave solution by $\varphi$ through the ansatz
\[ u (x, t) = e^{i \mu t} \left[ Q (x) + \varphi (x, t) \right], \]
the equation on $\varphi$ becomes
\[ i \partial_t \varphi + L_Q (\varphi) + O (\varphi^2) = 0 \]
where
\[ L_Q (\varphi) = \partial_x^2 \varphi - \mu \varphi + F (| Q |^2) \varphi +
   2\mathfrak{R}\mathfrak{e} (Q \bar{\varphi}) F' (| Q |^2) Q. \]
In this paper, we focus on the linearized equation
\[ i \partial_t \varphi + L_Q (\varphi) = 0 \]
in the case where $Q$ is an aforementioned ground state. The operator $L_Q$ is not complex linear but
simply real linear, and to turn this problem into a complex linear one, we
write this equation as
\[ i \partial_t \left(\begin{array}{c}
     \varphi\\
     \bar{\varphi}
   \end{array}\right) +\mathcal{H} \left(\begin{array}{c}
     \varphi\\
     \bar{\varphi}
   \end{array}\right) = 0 \]
where
\[ \mathcal{H} := \mathcal{H}_0 + V \]
with
\[ \mathcal{H}_0 := \left(\begin{array}{cc}
     \partial_x^2 - \mu & 0\\
     0 & - \partial_x^2 + \mu
   \end{array}\right) \]
and
\begin{equation}
  V := \left(\begin{array}{cc}
    F (Q^2) + F' (Q^2) Q^2 & F' (Q^2) Q^2\\
    - F' (Q^2) Q^2 & -F (Q^2) - F' (Q^2) Q^2
  \end{array}\right) . \label{potV}
\end{equation}

\subsection{The embedded spectrum of $\mathcal{H}$}
Since $Q$ is exponentially decaying, so is the potential
$V$, and by classical arguments, the essential spectrum of $\mathcal{H}$
is the same as the one of $\mathcal{H}_0$, namely
\[ \sigma_e (\mathcal{H}) = \sigma_e (\mathcal{H}_0) = ( - \infty, - \mu] \cup
   [\mu, + \infty ). \]

In order to prove linear as well as nonlinear asymptotic stability of the solitary wave $e^{i\mu t} Q(x)$, the first step is to exclude the existence of nonzero eigenvalues. Point spectrum can be ruled out by a number of methods, but it has remained an open problem to prove that there is no embedded spectrum for the operator $\mathcal{H}$ defined above, except in very specific cases. This problem is stated in \cite{CGNT,ErdoganSchlag}, and in the latter reference it is also conjectured that no embedded spectrum appears if $Q$ is a ground state.

To illustrate the difficulty of excluding embedded spectrum, it is helpful to think of the one-dimensional case. If one considers a scalar Schr\"odinger operator $-\partial_x^2 + V$, where $V$ decays appropriately, it a basic exercise in ODE theory to rule out the existence of embedded spectrum. But if one turns to the matrix Schr\"odinger operator
$$
\mathcal{G} = \left(\begin{array}{cc}
     \partial_x^2 - \mu & 0\\
     0 & - \partial_x^2 + \mu
   \end{array}\right) + \left(\begin{array}{cc}
    V & 0\\
     0 & -V   \end{array}\right),
$$
it is such that $\mathcal{G} \begin{pmatrix} f \\ 0 \end{pmatrix} = \lambda \begin{pmatrix} f \\ 0 \end{pmatrix}$ as soon as $(\partial_x^2 - \mu + V ) f = \lambda f$. Since this equation can have nontrivial solutions for $\lambda < -\mu$ (if $V$ is sufficiently negative), this means that embedded spectrum can exist for general matrix Schr\"odinger operators (this example appears in \cite{KriegerSchlag}).

To the best of our knowledge, the only cases where embedded spectrum could be excluded for $\mathcal{H}$ correspond are the following. First, for pure power nonlinearity in dimension 1, where $Q$ is given by an explicit formula, tools from complex analysis become available \cite{KriegerSchlag, Perelman}. Second, when a quadratic form akin to a virial identity is positive on a suitable space, as is checked numerically for various nonlinearities and dimensions \cite{Marzuola_2011,Asad-Simpson}.

The argument and the conclusions of the present paper apply in far greater generality, although still in dimension $1$.

\subsection{Main result}
Our goal is to exclude the existence of eigenvalues of
$\mathcal{H}$ in the essential spectrum. Given $\lambda \in \mathbb{R}$, we define the eigenspace
\[ \mathcal{E}_{\lambda} = \left\{ \left(\begin{array}{c}
     f\\
     g
   \end{array}\right) \in (L^2 (\mathbb{R}, \mathbb{R}))^2, \; (\mathcal{H}-
   \lambda) \left(\begin{array}{c}
     f\\
     g
   \end{array}\right) = 0 \right\}. \] 
Our main result is as follows.

\begin{thm}
  \label{main}
In dimension $1$, if $Q$ is an even, positive solution of (\ref{elliptic-standing-wave}), and
  converges to $0$ exponentially fast with $Q' < 0$ on $(0,+\infty)$, then for any $\lambda \in  \sigma_e (\mathcal{H})$,
  \[ \mathcal{E}_{\lambda} = \{ 0 \} .
  \]
\end{thm}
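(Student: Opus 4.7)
\bigskip

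\noindent\textbf{Proof plan.} The natural first move is to diagonalize the problem into real scalar operators. Setting $u := f+g$ and $w := f-g$, a direct computation turns $\mathcal{H}(f,g)^T = \lambda (f,g)^T$ into the coupled system
\begin{equation*}
L_+ u = \lambda w, \qquad L_- w = \lambda u,
\end{equation*}
where $L_- := \partial_x^2 - \mu + F(Q^2)$ and $L_+ := \partial_x^2 - \mu + F(Q^2) + 2F'(Q^2)Q^2$ are the standard scalar linearizations associated with \eqref{NLS}; crucially, \eqref{elliptic-standing-wave} and its derivative yield the explicit kernel elements $L_- Q = 0$ and $L_+ Q' = 0$. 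Eliminating $u$ produces the fourth order scalar ODE
\begin{equation*}
L_+ L_- w = \lambda^2 w,
\end{equation*}
whose coefficients are exponentially close to constants at $\pm\infty$. The spectral symmetry $\mathcal H \mapsto -\sigma \mathcal H \sigma$ with $\sigma = \bigl(\begin{smallmatrix} 0 & 1 \\ 1 & 0 \end{smallmatrix}\bigr)$ swaps $\lambda \leftrightarrow -\lambda$, so we may restrict attention to $\lambda \geq \mu > 0$.

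\medskip

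Next, I would carry out a Levinson/WKB asymptotic analysis of this reduced equation. At $\pm\infty$ the limiting equation $(\partial_x^2 - \mu)^2 w = \lambda^2 w$ has characteristic roots $\pm\sqrt{\mu + \lambda}$ (real) and $\pm i\sqrt{\lambda - \mu}$ (imaginary, with a polynomial degeneracy at the threshold $\lambda = \mu$). Levinson's theorem produces four linearly independent exact solutions, each asymptotic to its corresponding exponential or oscillatory mode. An $L^2$ eigenfunction must annihilate the growing exponential and both bounded oscillatory modes at each infinity --- three conditions in the four-dimensional solution space --- leaving a one-dimensional space of admissible asymptotic data at each end. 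Since $Q$ is even, the coefficients of $L_+ L_- - \lambda^2$ are even, so solutions decompose by parity; this reduces the problem to a half-line, with either Dirichlet-type data $w(0)=w''(0)=0$ or Neumann-type data $w'(0)=w'''(0)=0$ and the one-dimensional decay condition at $+\infty$.

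\medskip

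\noindent\textbf{Main obstacle.} The required transversality --- that the one-dimensional decay space at $+\infty$ intersects the parity constraint at $0$ only trivially --- does \emph{not} hold for generic matrix Schr\"odinger operators, as the example in the introduction illustrates. The ground state hypotheses ($Q > 0$ everywhere and $Q'<0$ on $(0,+\infty)$) must therefore enter in a quantitative way. My plan would be to exploit the Darboux factorization $L_- = -A^* A$ with $A := \partial_x - Q'/Q$, which is globally well-defined precisely because $Q$ is strictly positive, together with the analogous factorization of $L_+$ away from the origin coming from $Q' \in \ker L_+$. These structures suggest looking for a weighted virial or Wronskian identity, with weights built from $Q$ and $Q'$, that pairs an $L^2$ solution $w$ against the reduced equation and integrates to a sum of squares of derivatives of $w$ with a definite sign; a vanishing identity of this type would force $w \equiv 0$. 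The truly hard step, in my view, is to find such an identity that respects both the coupling between $L_+$ and $L_-$ and the parity constraints at $x=0$, and that remains valid uniformly down to the threshold $\lambda = \mu$ where the oscillating modes degenerate into polynomials; a continuation/shooting argument in $\lambda$ starting from this threshold, where $Q$ and $Q'$ provide explicit generalized eigenfunctions, is a plausible alternative route.
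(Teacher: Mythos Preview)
Your setup matches the paper: the change of variables to $u=f+g$, $w=f-g$, the coupled system through $L_\pm$, the kernel elements $Q\in\ker L_-$ and $Q'\in\ker L_+$, and the Levinson-type asymptotics showing that an $L^2$ solution must be a multiple of the single exponentially decaying mode at each infinity. Even the Darboux factorizations you write down are the right objects.

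The gap is that you stop exactly where the argument becomes specific. The route you propose --- a weighted virial identity summing to a definite sign --- is precisely the approach the introduction attributes to prior work, where it has only been checked numerically; you do not produce such an identity, and your alternative shooting/continuation suggestion is equally unexecuted. The paper's actual argument is much more elementary and uses your Darboux ingredients differently. Rather than an $L^2$ identity, one writes the \emph{pointwise} variation-of-parameters formulas coming from $L_-Q=0$ and $L_+Q'=0$: for a solution with the required decay at $+\infty$ one has, on $(0,\infty)$,
\[
w(x)=\lambda\,Q(x)\int_x^\infty\frac{1}{Q^2(t)}\int_t^\infty Q\,u\,,\qquad
u(x)=\lambda\,Q'(x)\int_x^\infty\frac{1}{Q'^{\,2}(t)}\int_t^\infty Q'\,w\,.
\]
Because $Q>0$ and $Q'<0$ on $(0,\infty)$, and because the decaying mode at $+\infty$ gives $u$ and $w$ the same sign there (for $\lambda\ge\mu$), these iterated integrals have a definite sign and \emph{propagate} positivity backwards: $u,w>0$ on the entire half-line, from which one reads off $w(0)>0$ and $w'(0)<0$. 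Running the identical argument from $-\infty$ forces $w(0)w'(0)>0$, a contradiction. No virial identity and no continuation in $\lambda$ are needed; the threshold $\lambda=\mu$ requires nothing beyond replacing the oscillating basis by $1,x$ in the asymptotic analysis. The mechanism you were searching for is a sign-propagation (maximum-principle flavored) argument, not a coercivity estimate.
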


\subsection{Implications for the asymptotic stability of solitary waves}

In \cite{BuslaevPerelman,BuslaevSulem,CollotGermain,Cuccagna}, asymptotic stability of ground states of \eqref{NLS} is proved under the assumption that embedded spectrum is absent. This assumption can be removed by Theorem \ref{main}.

\subsection*{Acknowledgements} PG was supported by the Simons Foundation through the Wave Turbulence Collaboration, and by a Wolfson fellowship of the Royal Society.

This result is part of the ERC starting grant project FloWAS that has received funding from the European Research Council (ERC) under the Horizon Europe research and innovation program (Grant agreement No. 101117820). C. Collot was supported by the CY Initiative of Excellence Grant ”Investissements d’Avenir” ANR-16-IDEX-0008 via Labex MME-DII, the grant ”Chaire Professeur Junior” ANR-22-CPJ2-0018-01 of the French National Research Agency and the grant BOURGEONS ANR-23-CE40-0014-01 of the French National Research Agency.

\section{Proof of Theorem \ref{main}}

We consider now $Q$ satisfying the hypotheses of Theorem \ref{main}. Since $Q>0$, decays exponantially fast and solves $Q'' - \mu Q + F (Q^2) Q = 0$ with $\mu > 0$ and $F
(0) = 0$, by classical arguments, there exists $c_0 > 0$ such
that
\begin{equation}
  Q (x) = c_0 e^{-\sqrt{\mu} x} (1 + o_{x \rightarrow
  + \infty} (1)),Q'(x)=-\sqrt{\mu}c_0 e^{-\sqrt{\mu} x} (1+o_{x \rightarrow
  + \infty} (1)). \label{kanine}
\end{equation}
Indeed, $Q$ is solution to the linear ODE $f''-\mu f+F(Q^2)f = 0$ which has two solutions, one growing and the other decaying exponentially fast at rate $\sqrt{\mu}$. $Q$ then must be a linear combination of them, and since it decays, it is colinear to the exponentially decaying solution.
This solution can be constructed via the integral
formulation
\begin{equation}
  f (x) = e^{- \sqrt{\mu} x} - e^{\sqrt{\mu} x} \int_x^{+ \infty} e^{- 2
  \sqrt{\mu} t} \int_t^{+ \infty} F (Q^2) (s) e^{\sqrt{\mu} s} f (s) d s d t
  \label{avb}
\end{equation}
and a fixed point argument for $x \geqslant R > 0$ large enough to show that
the second part of the right hand side is a contraction for the norm $\left\|
f e^{\sqrt{\mu} .} \right\|_{C^1 ([R, + \infty [)}$. Differentiating it, we
deduce the equivalent when $x \rightarrow + \infty$ of $Q' (x) = c_0 f'(x)$.

\subsection{Behavior near $x = + \infty$ of solutions}

\begin{lem}
  \label{fiberoptic}Given $\lambda \in  \sigma_e (\mathcal{H})$, any nonzero solution
  of
  \[ (\mathcal{H}- \lambda) \left(\begin{array}{c}
       f\\
       g
     \end{array}\right) = 0 \]
  that is in $L^2 (\mathbb{R}, \mathbb{R})$ must satisfy, for some $c_* \neq 0$,

\begin{align*}
& \left(\begin{array}{c}
       f\\
       g
     \end{array}\right) = c_{\ast} e^{-\sqrt{\mu + \lambda} x} \left( \left(\begin{array}{c}
       1\\
       0
     \end{array}\right) + o_{x \rightarrow + \infty} (1) \right)
      \qquad \mbox{if $\lambda \geq \mu$} \\
& \left(\begin{array}{c}
       f\\
       g
     \end{array}\right) = c_{\ast} e^{-\sqrt{\mu - \lambda} x}
     \left( \left(\begin{array}{c}
       0\\
       1
     \end{array}\right) + o_{x \rightarrow + \infty} (1) \right) \qquad \mbox{if $\lambda \leq - \mu$}.
\end{align*}

\end{lem}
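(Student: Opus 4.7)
The plan is to convert the eigenvalue equation $(\mathcal{H}-\lambda)(f,g)^T = 0$ into a first-order linear system $Y' = A(x) Y$ for $Y = (f, f', g, g')^T \in \mathbb{R}^4$, and to read off the admissible asymptotics at $+\infty$ from the limit matrix $A_\infty$. Since the potential $V$ decays exponentially at rate at least $2\sqrt{\mu}$ by \eqref{kanine}, so does $A(x) - A_\infty$; and the spectrum of $A_\infty$ consists of $\pm\sqrt{\mu+\lambda}$ on the $(f, f')$ block and $\pm\sqrt{\mu-\lambda}$ on the $(g, g')$ block. Before the main argument I would reduce to $\lambda \geq \mu$: swapping the two scalar components and flipping a sign shows that $(f, g)$ solves $(\mathcal{H}-\lambda)(f,g)^T = 0$ if and only if $(g, f)$ solves $(\mathcal{H}+\lambda)(g,f)^T = 0$, so the $\lambda \leq -\mu$ case is obtained from the $\lambda \geq \mu$ case by this symmetry.

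For $\lambda > \mu$ the four eigenvalues of $A_\infty$ are $\pm\sqrt{\mu+\lambda}$ and $\pm i\omega$ with $\omega = \sqrt{\lambda-\mu}$, all distinct. Since $A(x)-A_\infty$ is in $L^1(\mathbb{R}_+)$, a Levinson-type asymptotic integration theorem (or a direct contraction-mapping argument in weighted $C^1$ norms akin to \eqref{avb}, carried out separately in each diagonalized sector) produces a basis of solutions $Y_1, \dots, Y_4$ with $Y_j(x) = e^{\beta_j x}(v_j + o(1))$ as $x \to +\infty$, where $v_j$ is the eigenvector of $A_\infty$ attached to $\beta_j$. Expanding any $L^2$ solution in this basis, the coefficient of the growing mode $e^{+\sqrt{\mu+\lambda}x}$ must vanish since the solution is bounded, and the coefficients of the two oscillatory modes $e^{\pm i\omega x}$ must vanish because any nontrivial real combination of $\cos(\omega x)$ and $\sin(\omega x)$ carries infinite $L^2$ mass on a half-line. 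Only the decaying mode $c_* e^{-\sqrt{\mu+\lambda}x}(1, -\sqrt{\mu+\lambda}, 0, 0)$ survives, which gives exactly the stated asymptotics; moreover $c_* \neq 0$, since otherwise $(f, g)$ would vanish near $+\infty$ and hence identically by Cauchy uniqueness for the ODE system.

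The threshold case $\lambda = \mu$ is the main obstacle. Here $0$ is a double eigenvalue of $A_\infty$ with a nontrivial Jordan block in the $(g, g')$ sector, so two of the asymptotic modes have polynomial behavior $g \sim a + b x$ rather than exponential decay, and the standard Levinson dichotomy does not directly rule them out. To treat this case I would integrate the $g$-equation
\[
g'' = -F(Q^2) g - F'(Q^2) Q^2 (f+g)
\]
directly. After bootstrapping the $L^2$ hypothesis to pointwise bounds on $(f, g)$ via Sobolev embedding (the eigenvalue equation yields $H^2$ a priori control since $V$ is bounded), the right-hand side is $O(e^{-2\sqrt{\mu} x})$, hence integrable at infinity. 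Integrating twice gives $g(x) = a + b x + O(e^{-c x})$ for some $c > 0$, and $g \in L^2(\mathbb{R}_+)$ forces $a = b = 0$. With $g$ now known to decay exponentially, the scalar equation for $f$ has an exponentially decaying source, and the Levinson argument of the previous paragraph applies to single out $f \sim c_* e^{-\sqrt{2\mu} x}$ with $c_* \neq 0$.
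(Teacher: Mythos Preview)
Your approach is essentially the paper's: build a fundamental system of four solutions with prescribed asymptotics at $+\infty$ (the paper does this by a fixed-point construction as in \eqref{avb}, you invoke Levinson/Hartman--Wintner), then observe that an $L^2$ solution can only pick up the exponentially decaying mode. Your symmetry reduction $(f,g,\lambda)\leftrightarrow(g,f,-\lambda)$ is a clean shortcut not in the paper, and your handling of the non-threshold case $\lambda>\mu$ is complete.

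There is one small gap at the threshold $\lambda=\mu$. After your bootstrap you obtain $g=O(e^{-2\sqrt{\mu}x})$ and then write ``the Levinson argument of the previous paragraph applies to single out $f\sim c_* e^{-\sqrt{2\mu}x}$ with $c_*\neq 0$''. But the justification you gave earlier for $c_*\neq 0$ was Cauchy uniqueness from $(f,g)$ vanishing near $+\infty$, and that does not apply here: if the coefficient of the $e^{-\sqrt{2\mu}x}$ mode vanished, the scalar equation for $f$ still has the inhomogeneous term $-F'(Q^2)Q^2 g$, so you would only get $f=O(e^{-2\sqrt{\mu}x})$, not $f\equiv 0$ on a neighbourhood of $+\infty$. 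The cleanest repair is to stay with the four-dimensional system also at threshold: the Jordan-block version of Levinson (or the same contraction argument as in \eqref{avb}) still produces a basis with asymptotics $e^{\pm\sqrt{2\mu}x}(1,0)^T$, $(0,1)^T$, $(0,x)^T$ at $+\infty$ (this is exactly what the paper does), and then any nonzero $L^2$ solution is a nonzero multiple of the decaying mode, giving $c_*\neq 0$ for free. Alternatively you can iterate your bootstrap to show $(f,g)=O(e^{-Nx})$ for every $N$ and then conclude vanishing from the fundamental-system decomposition, but that is more work than simply treating the threshold case uniformly.
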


\begin{proof}
  We recall that the equation $\mathcal{H}- \lambda = 0$ is
  \[ \left(\begin{array}{cc}
       \partial_x^2 - \mu - \lambda & 0\\
       0 & - (\partial_x^2 - \mu + \lambda)
     \end{array}\right) \left(\begin{array}{c}
       f\\
       g
     \end{array}\right) = - V \left(\begin{array}{c}
       f\\
       g
     \end{array}\right) . \]
  
  \
  
\noindent  1) If $\lambda > \mu > 0$, if we consider the equation with $V = 0$, the
  radial solutions of $\partial_x^2 - \mu - \lambda = 0$ are $e^{-\sqrt{\mu + \lambda} x} $ and $e^{\sqrt{\mu + \lambda} x} $, while the solutions of $\partial_x^2 - \mu +
  \lambda$ are $\cos \left( \sqrt{\lambda - \mu } x \right)$ and
  $\sin \left( \sqrt{\lambda - \mu } x \right)$.
  
  Since $V$ decays exponentially fast by (\ref{kanine}), we construct by a
  fixed point argument four solutions of $\mathcal{H}- \lambda = 0$ near $x =
  + \infty$ which satisfy respectively

  \begin{eqnarray*}
    \left(\begin{array}{c}
      f_1\\
      g_1
    \end{array}\right) & = & e^{-\sqrt{\mu + \lambda } x}\left( \left(\begin{array}{c}
       1\\
       0
     \end{array}\right) + o_{x \rightarrow + \infty} (1) \right)\\
    \left(\begin{array}{c}
      f_2\\
      g_2
    \end{array}\right) & = & e^{\sqrt{\mu +  \lambda} x} \left( \left(\begin{array}{c}
       1\\
       0
     \end{array}\right) + o_{x \rightarrow + \infty} (1) \right)\\
    \left(\begin{array}{c}
      f_3\\
      g_3
    \end{array}\right) & = & \cos \left( \sqrt{ \lambda - \mu }\, x
    \right)\left( \left(\begin{array}{c}
       0\\
       1
     \end{array}\right) + o_{x \rightarrow + \infty} (1) \right)\\
    \left(\begin{array}{c}
      f_4\\
      g_4
    \end{array}\right) & = & \sin \left( \sqrt{ \lambda - \mu }\, x
    \right)\left( \left(\begin{array}{c}
       0\\
       1
     \end{array}\right) + o_{x \rightarrow + \infty} (1) \right) .
  \end{eqnarray*}
  They can indeed be constructed by a fixed point similar to (\ref{avb}), replacing $F(Q^2)$ by $V$, $\mu$ by $\mu+\lambda$ or $\mu-\lambda$ (with the possibility for their square root to be imaginary) and for vector valued functions, using the decay of $V$ to close the fixed point argument for $r$ large enough.

\
  
  Since $\mathcal{H}- \lambda = 0$ is a fourth order ODE,
  any solution must be a linear combination of these functions. Looking at
  their behavior near $x = + \infty$, the only possibility for such a
  combination to be in $L^2 (\mathbb{R}, \mathbb{R})$ is for
  the solution to be colinear to $\left(\begin{array}{c}
    f_1\\
    g_1
  \end{array}\right)$.
  
  \
  
\noindent  2)  If $\lambda < - \mu$, the solutions of $\partial_x^2 - \mu - \lambda$ are then
  $\cos \left( \sqrt{| \lambda + \mu |} x \right)$ and $\sin
  \left( \sqrt{| \mu + \lambda |} x \right)$, while the solutions of $\partial_x^2 -
  \mu + \lambda$ are $e^{-\sqrt{\mu -  \lambda } x}$
  and $e^{\sqrt{\mu -  \lambda } x}$. We conclude
  as in the first case, but now the exponentially decaying function is at the limit on the
  vector $\left(\begin{array}{c}
    0\\
    1
  \end{array}\right)$.

\noindent 3) If $\lambda = \mu$ or $\lambda = - \mu$, instead of the two oscillating functions, it is $1$ and $x$ that are solutions for $V=0$. We can still construct two solutions of $\mathcal{H}-\lambda$ with these behaviors near $x = +\infty$ by the same fixed point argument, and any linear combination of them do not belong in $L^2$, leading to the same conclusion.
  
\end{proof}

\subsection{Inversion results involving $L_+$ and $L_-$}

We define
\begin{align*}
& L_- = - \partial_x^2 + \mu - F (Q^2) \\
& L_+ = - \partial_x^2 + \mu - F (Q^2) - 2 F' (Q^2) Q.
\end{align*}
\begin{lem}(Inversion of $L_-$)
  \label{bonesintosand}
 Consider the equation $L_- (u) = v$ on functions assuming that there exists $\varepsilon > 0$ such that
  \[ \left\| (| u | + | v |) (x) e^{\left( \sqrt{\mu} + \varepsilon \right) x}
     \right\|_{L^{\infty} ([1, + \infty [, \mathbb{R})} < + \infty . \]
  Then
  \[ u (x) = - Q (x) \int_x^{+ \infty} \frac{1}{Q^2 (t) } \int_t^{+
     \infty} Q (s)  v (s) \dd s. \]
\end{lem}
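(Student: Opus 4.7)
The plan is to exploit the fact that $Q$ itself is a homogeneous solution of $L_-$. Indeed, the elliptic equation \eqref{elliptic-standing-wave} can be rewritten as $-\partial_x^2 Q + \mu Q - F(Q^2) Q = 0$, i.e.\ $L_- Q = 0$. Since $L_-$ is a second order operator with no first order term, a Wronskian-based reduction of order produces a second independent homogeneous solution of the form $\widetilde{Q}(x) = Q(x) \int^x Q(s)^{-2}\,ds$, which, in view of \eqref{kanine}, behaves like $e^{\sqrt{\mu}\,x}$ at $+\infty$ (while $Q$ decays like $e^{-\sqrt{\mu}\,x}$).

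With these two solutions in hand, a standard variation of parameters computation suggests that the particular solution
\[
u_p(x) = -Q(x) \int_x^{+\infty} \frac{1}{Q^2(t)} \int_t^{+\infty} Q(s) v(s)\, ds\, dt
\]
satisfies $L_- u_p = v$. Rather than invoking the full variation of parameters machinery, I would verify this by a direct differentiation: writing $A(t) = \int_t^{+\infty} Q v$, one computes
\[
u_p'(x) = -Q'(x)\int_x^{+\infty}\frac{A(t)}{Q^2(t)}\,dt + \frac{A(x)}{Q(x)},
\]
and then
\[
u_p''(x) = -Q''(x)\int_x^{+\infty}\frac{A(t)}{Q^2(t)}\,dt - v(x),
\]
after which $L_- u_p = v$ follows directly from $Q'' = (\mu - F(Q^2)) Q$.

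It remains to pin down the particular solution using the decay hypothesis on $u$ and $v$. Using \eqref{kanine}, the bound $|v(x)| \lesssim e^{-(\sqrt{\mu}+\varepsilon)x}$ on $[1,+\infty)$ yields $|Q(s)v(s)| \lesssim e^{-(2\sqrt{\mu}+\varepsilon)s}$, then $|A(t)/Q^2(t)| \lesssim e^{-\varepsilon t}$, and finally $|u_p(x)| \lesssim e^{-(\sqrt{\mu}+\varepsilon)x}$. Hence both $u$ and $u_p$ belong to the weighted space with weight $e^{(\sqrt{\mu}+\varepsilon)x}$, so their difference $w := u - u_p$ satisfies $L_- w = 0$ and the same decay. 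Any such $w$ is a linear combination $a Q + b \widetilde{Q}$; the growth of $\widetilde{Q}$ forces $b = 0$, and the fact that $Q$ decays only at rate $e^{-\sqrt{\mu}\,x}$, strictly slower than $e^{-(\sqrt{\mu}+\varepsilon)x}$, forces $a = 0$. Therefore $u = u_p$.

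The proof is essentially routine ODE theory once the observation $L_- Q = 0$ has been made. The only delicate point is the decay bookkeeping in the last step, which has to be tight enough to discriminate between the two homogeneous solutions; the assumed extra exponential decay rate $\varepsilon$ is exactly what is needed to kill the $Q$-component as well as the $\widetilde{Q}$-component.
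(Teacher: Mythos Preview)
Your proof is correct and follows essentially the same route as the paper: both exploit $L_-Q=0$ and reduction of order to produce the integral formula, then use the assumed $e^{-(\sqrt{\mu}+\varepsilon)x}$ decay to eliminate the two homogeneous modes $Q$ and $Q\int Q^{-2}$. The only cosmetic difference is that the paper writes $u=Qh$ and integrates to the general solution with constants $c_1,c_2$ (then shows $c_1=c_2=0$), whereas you verify the particular solution by direct differentiation and treat the homogeneous part separately; the substance and the decay bookkeeping are identical.
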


\begin{proof}
  We have $L_- (Q) = 0$. We look for a solution of the form $u = Q h$ leading
  to
  \[ - \frac{1}{Q} (Q^2 h')' = - Q \left( h'' +  2
     \frac{Q'}{Q}  h' \right) = v \]
  hence
  \[ (Q^2 h')' = - Q  v. \]
  We deduce that
  \[ Q^2(x) h' (x) = c_2 + \int_x^{+ \infty} Q (t) v (t) \dd t
  \]
  for some $c_2 \in \mathbb{R}$, leading to
  \[ h (x) = c_1 + c_2 \int_1^x \frac{\dd t}{Q^2 (t) } - \int_x^{+
     \infty} \frac{1}{Q^2 (t) } \int_t^{+ \infty} Q (s) v
     (s) \dd s \]
  for some $c_1 \in \mathbb{R}$. Indeed, with the decay near $x = + \infty$ of
  $Q$ and $v$, these integral are all well defined. In other words,
  \begin{eqnarray*}
u (x) = c_1 Q (x) + c_2 Q (x) \int_1^x \frac{\dd t}{Q^2 (t) } - Q
    (x) \int_x^{+ \infty} \frac{1}{Q^2 (t)} \int_t^{+ \infty} Q (s)
    v (s) \dd s \dd t.
  \end{eqnarray*}
  Now, with the decay at $+ \infty$ of both $Q$ and $v$, $Q (x) \int_1^x
  \frac{d t}{Q^2 (t) }$ grows exponentially fast, and all the other
  quantities in this equality converges to $0$, hence $c_2 = 0$. But then $Q(x) = c_0
  e^{-\sqrt{\mu} x} (1 + o_{x \rightarrow + \infty}
  (1))$ is the only remaining term that does not decay at least like $e^{-
  \left( \sqrt{\mu} + \varepsilon \right) x}$ for some $\varepsilon > 0$, hence
  $c_1 = 0$.
\end{proof}

\begin{lem}(Inversion of $L_+$) \label{InvL+}
Assuming that $Q'<0$ on $\mathbb{R}$, two functions satisfying $L_+ (u) = v$ under the constraint that there exists $\varepsilon > 0$ such that
  \[ \left\| (| u | + | v |) (x) e^{\left( \sqrt{\mu} + \varepsilon \right) x}
     \right\|_{L^{\infty} ([1, + \infty [, \mathbb{R})} < + \infty. \]
are such that  \[ u(x) = - Q' (x) \int_x^{+ \infty} \frac{1}{Q'^2 (t) }
     \int_t^{+ \infty} Q' (s)  v (s) \dd s. \]
\end{lem}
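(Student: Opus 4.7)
The plan is to mimic, nearly verbatim, the proof of Lemma \ref{bonesintosand}, replacing the role of $Q$ (which satisfies $L_-Q=0$) by $Q'$ (which will satisfy $L_+Q'=0$).

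The first step is to record that $L_+(Q')=0$. This follows by differentiating the standing wave equation $Q'' - \mu Q + F(Q^2)Q = 0$: one obtains $Q''' - \mu Q' + 2F'(Q^2)Q^2 Q' + F(Q^2)Q'=0$, which is exactly $L_+(Q')=0$ (here I am reading $L_+ = -\partial_x^2 + \mu - F(Q^2) - 2F'(Q^2)Q^2$; the $Q$ written in the statement should be $Q^2$, but in any case the identity $L_+Q'=0$ is what I will use). Since the hypothesis $Q'<0$ on $\mathbb{R}$ guarantees that $Q'$ is nonvanishing, I may make the ansatz $u = Q' h$.

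Substituting $u=Q'h$ into $L_+u = v$ and expanding $-(Q'h)'' = -Q''' h - 2Q'' h' - Q' h''$, the $Q'''h$ term combines with $(\mu - F(Q^2) - 2F'(Q^2)Q^2)Q' h$ and cancels thanks to $L_+Q'=0$. What remains is
\[
-2 Q'' h' - Q' h'' = v, \qquad \text{i.e.} \qquad (Q'^2 h')' = -Q' v.
\]
Integrating twice using the decay of $Q'$ and $v$ at $+\infty$ gives
\[
h(x) = c_1 + c_2 \int_1^x \frac{dt}{Q'^2(t)} - \int_x^{+\infty} \frac{1}{Q'^2(t)} \int_t^{+\infty} Q'(s) v(s)\, ds\, dt,
\]
so that
\[
u(x) = c_1 Q'(x) + c_2 Q'(x)\int_1^x \frac{dt}{Q'^2(t)} - Q'(x)\int_x^{+\infty}\frac{1}{Q'^2(t)}\int_t^{+\infty} Q'(s)v(s)\,ds\,dt.
\]

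The final step is to show $c_1=c_2=0$ using the prescribed decay rate of $u$. From \eqref{kanine} one has $Q'(x)\sim -\sqrt{\mu}c_0 e^{-\sqrt{\mu}x}$, so $1/Q'^2 \sim (\mu c_0^2)^{-1} e^{2\sqrt{\mu}x}$; hence the $c_2$-term grows like $e^{\sqrt{\mu}x}$, forcing $c_2=0$ since all other terms decay. A similar decay accounting on the double integral (using $|v(x)|\lesssim e^{-(\sqrt{\mu}+\varepsilon)x}$) shows that it is of order $e^{-(\sqrt{\mu}+\varepsilon)x}$, which is strictly faster than $c_1 Q'(x)\sim e^{-\sqrt{\mu}x}$, so the hypothesis on $u$ forces $c_1=0$ as well. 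The main (though very minor) obstacle is really just verifying $L_+ Q'=0$ and bookkeeping the decay rates; the algebraic structure of the proof is identical to Lemma \ref{bonesintosand}.
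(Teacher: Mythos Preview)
Your proof is correct and follows exactly the approach indicated in the paper, which simply says that since $L_+(Q')=0$, the argument of Lemma~\ref{bonesintosand} goes through with $Q'$ in place of $Q$. You have also correctly spotted the typo in the definition of $L_+$ (the last term should be $2F'(Q^2)Q^2$, not $2F'(Q^2)Q$), which is needed for $L_+Q'=0$ to hold.
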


Since $L_+(Q')=0$, the proof is identical to that of Lemma \ref{bonesintosand}, replacing $L_-$ by $L_+$ and $Q$ by $Q'$.

\subsection{End of the proof of Theorem \ref{main}}

The equation
\[ (\mathcal{H}- \lambda) \left(\begin{array}{c}
     f\\
     g
   \end{array}\right) = 0 \]
can be written as the system
\[ \left\{\begin{array}{l}
     L_+ (f + g) = - \lambda (f - g)\\
     L_- (f - g) = - \lambda (f + g) .
   \end{array}\right. \]
We define $u = f + g, v = f - g$. Suppose that $f, g \in L^2 (\mathbb{R},
\mathbb{R}) \setminus \{ (0,0) \}$. Then, by Lemma \ref{fiberoptic}, there exists $\varepsilon > 0$
such that $\left\| (| u | + | v |) (x) e^{\left( \sqrt{\mu} + \varepsilon
\right) x} \right\|_{L^{\infty} ([1, + \infty [, \mathbb{R})} < + \infty$, and
then by Lemma \ref{bonesintosand}, we have
\begin{align}
 & u (x) = \lambda Q' (x) \int_x^{+ \infty} \frac{1}{Q'^2 (t)
  } \int_t^{+ \infty} Q' (s)  v (s) \dd s \label{camo}\\
 & v (x) = \lambda Q (x) \int_x^{+ \infty} \frac{1}{Q^2 (t) }
  \int_t^{+ \infty} Q (s) u (s) \dd s \label{crooked}
\end{align}
where $Q>0$ and $Q'<0$ do not vanish.

\

To start with, we consider the case $\lambda \geqslant \mu > 0$. Then, near $x = +
\infty$, by Lemma \ref{fiberoptic} we have
\[ \left(\begin{array}{c}
     u\\
     v
   \end{array}\right) = c_{\ast}  e^{-\sqrt{ \mu  + 
   \lambda } x} \left( \left(\begin{array}{c}
       1\\
       1
     \end{array}\right) + o_{x \rightarrow + \infty} (1) \right) \]
for some $c_{\ast} \neq 0$; we can even assume that $c_{\ast} = 1$.
This implies that near $x = + \infty$, we have $u > 0, v > 0$. Let us show that this implies that $u, v > 0$ on
$\mathbb{R}^{+ \ast}$. Indeed, denoting by $x_{\ast} > 0$ the largest value
such that $u (x_{\ast}) v (x_{\ast}) = 0$, then since $u, v > 0$ on
$]x_{\ast}, + \infty [$ and $Q,-Q'>0,  $ we get a contradiction by the
formulas (\ref{camo}) and (\ref{crooked}).

We have therefore shown that $u, v > 0$ on $\mathbb{R}^{+ \ast}$.

Using $Q'(0)=0$ since $Q$ is even, (\ref{crooked}) and the continuity of $v$ at $x=0$, we have in particular that $v
(0) > 0, v' (0) < 0$.
By Lemma \ref{fiberoptic}, we have that
\[ \left(\begin{array}{c}
     u\\
     v
   \end{array}\right) = c_{\ast} e^{-\sqrt{\mu + | \lambda |} (- x)  }\left( \left(\begin{array}{c}
       1\\
       0
     \end{array}\right) + o_{x \rightarrow + \infty} (1) \right) \]
for some $c_{\ast} \in \mathbb{R}^{\ast}$ that we can no longer normalize to $1$. However, by similar arguments using Lemmas \ref{bonesintosand} and \ref{InvL+}, we have (now
coming from $x = - \infty$) that
\[ c_{\ast} v (0) > 0, c_{\ast} v' (0) > 0, \]
leading to $v (0) v' (0) > 0$, which is in contradiction with $v (0) > 0, v'
(0) < 0$.

\

For the case $\lambda \leqslant - \mu < 0$, by Lemma \ref{fiberoptic} we have now
\[ \left(\begin{array}{c}
     u\\
     v
   \end{array}\right) = c_{\ast} e^{-\sqrt{ \mu  - 
   \lambda } x } \left( \left(\begin{array}{c}
       1\\
       -1
     \end{array}\right) + o_{x \rightarrow + \infty} (1) \right). \]
Normalizing with $c_\ast = 1$
and using the fact that $\lambda < 0$, we claim that we can show here that $u > 0$ and now $v < 0$ on $\mathbb{R}^{+ \ast}$, and we reach a contradiction similarly.

\bibliographystyle{abbrv}

\bibliography{references}

\end{document}